\newcommand\reallywidehat[1]{\arraycolsep=0pt\relax%
\begin{array}{c}
\stretchto{
  \scaleto{
    \scalerel*[\widthof{\ensuremath{#1}}]{\kern-.5pt\bigwedge\kern-.5pt}
    {\rule[-\textheight/2]{1ex}{\textheight}} 
  }{\textheight} %
}{0.5ex}\\           
#1\\                 
\rule{-1ex}{0ex}
\end{array}
}
\newcommand{\cL}{\mathcal{L}}
\newcommand{\Adual}{A^{\vee}}
\newcommand{\hra}{\hookrightarrow}
\newcommand{\ssstyle}{\scriptscriptstyle}
\newcommand{\Q}{\mathbb{Q}}
\newcommand{\Z}{\mathbb{Z}}
\newcommand{\isom}{\cong}
\DeclareMathOperator{\coker}{coker}
\DeclareMathOperator{\Gal}{Gal}
\DeclareMathOperator{\im}{im}
\DeclareMathOperator{\Nm}{Nm}
\DeclareMathOperator{\Res}{Res}
\DeclareMathOperator{\Sel}{Sel}
\DeclareMathOperator{\res}{res}
\DeclareMathOperator{\ur}{nr}
\DeclareMathOperator{\Hom}{Hom}
\DeclareMathOperator{\TT}{TT}
\theoremstyle{plain}
\newtheorem{theorem}{Theorem}[section]
\newtheorem{proposition}[theorem]{Proposition}
\newtheorem{corollary}[theorem]{Corollary}
\newtheorem{lemma}[theorem]{Lemma}
\theoremstyle{definition}
\theoremstyle{remark}
\newtheorem{remark}[theorem]{Remark}
\numberwithin{equation}{section}
\newcommand{\thetitle}
{Duality of Selmer groups of an abelian variety over a number field}
      \def\@setcopyright{}
      \def\serieslogo@{}
\begin{document}

\title{\thetitle} 
\author{Saikat Biswas}
\address{Department of Mathematical Sciences, The University of Texas at Dallas, Richardson, TX 75080-3021.}
\email{saikat.biswas@utdallas.edu}

\begin{abstract}
Let $A$ be an abelian variety defined over a number field $K$ and let $\Adual$ be the dual abelian variety. For an odd prime $p$, we consider two Selmer groups attached to $A[p]$ and relate the orders of these groups along with those of their corresponding duals to the order of the component groups of $\Adual$ at primes $v$.
\end{abstract}

\subjclass[2010]{Primary 11G10; Secondary 11S25, 12G05}

\keywords{Selmer groups, Tamagawa numbers, abelian varieties}

\maketitle


\section{Introduction}
In order to motivate the main result of this paper, we begin with a quick review of \emph{generalized Selmer groups}, fixing notations along the way. Let $K$ be a number field with Galois group $G_K:=\Gal(\overline{K}/K)$, for a separable closure $\overline{K}$ of $K$. For every prime $v$ of $K$, let $K_v$ be the completion of $K$ at $v$ with Galois group $G_v:=G_{K_v}$. When $v$ is finite, we denote by $K_v^{\ur}$ the maximal unramified extension of $K_v$ whose Galois group is identified with the \emph{inertia group} $I_v\subset G_v$ at $v$. The  Galois group of the finite residue field $k_v$ of $K_v$ is given by the quotient group $G_v/I_v$. Let $M$ be a discrete $G_K$-module of finite cardinality, $\mu_{\infty}$ be the $G_K$-module of roots of unity in $\overline{K}$, and $M^*:=\Hom_{\Z}(M,\mu_{\infty})$ the \emph{Cartier dual} of $M$. We shall also denote by $\widehat{M}:=\Hom_{\Z}(M,\Q/\Z)$ the \emph{Pontryagin dual} of $M$. There is a continuous discrete action of $G_K$ on $M^*$ arising out of the corresponding action of $G_K$ on $M$ as well as on ${\overline{K}}^{\times}$. We consider a \emph{Selmer structure} $\cL$ for $M$, i.e. a family of subgroups $H^1_{\cL}(K_v,M)\subset H^1(K_v,M)$ for every prime $v$ such that $H^1_{\cL}(K_v,M)=H^1(G_v/I_v,M^{I_v})$ for all but finitely many $v$. The corresponding Selmer structure $\cL^*$ for $M^*$ is the family of subgroups $H^1_{\cL^*}(K_v,M^*)\subset H^1(K_v,M^*)$, where $H^1_{\cL^*}(K_v,M^*)$ is the orthogonal complement of $H^1_{\cL}(K_v,M)$ under the local Tate pairing 
$$H^1(K_v,M)\times H^1(K_v,M^*)\to\Q/\Z$$ 
The \emph{generalized Selmer group} $H^1_{\cL}(K,M)$ of $M$ is the subgroup of all classes in $H^1(K,M)$ whose image under the restriction map 
$$\Res_v:H^1(K,M)\to H^1(K_v,M)$$ 
lies in $H^1_{\cL}(K_v,M)$ for every $v$. The Selmer group $H^1_{\cL^*}(K,M^*)$ of $M^*$ is defined analogously, and is called the \emph{dual Selmer group} attached to $H^1_{\cL}(K,M)$. We have the following important result due to Wiles \cite{wiles}, suggested by a result of Greenberg \cite{gre}, which allows for the comparison of the order of a Selmer group to that of its dual.

\begin{theorem}\label{t1}
The groups $H^1_{\cL}(K,M)$ and $H^1_{\cL^*}(K,M^*)$ are finite, and we have
$$\frac{\#H^1_{\cL}(K,M)}{\#H^1_{\cL^*}(K,M^*)}=\frac{\#H^0(K,M)}{\#H^0(K,M^*)}\displaystyle\prod_{v}\frac{\#H^1_{\cL}(K_v,M)}{\#H^0(K_v,M)}$$
\end{theorem}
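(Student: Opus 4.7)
The plan is to follow the standard three-step strategy: pass to an $S$-modified setup to enforce finiteness, extract a short exact sequence comparing the two Selmer groups via Poitou--Tate global duality, and count orders using the local/global Euler characteristic formulas.

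\textbf{Step 1 (Reduction to a finite $S$).} I first choose a finite set $S$ of primes of $K$ containing all archimedean primes, all primes dividing $\#M$, all primes where $M$ is ramified, and all primes where $H^1_{\cL}(K_v,M)$ differs from the unramified subgroup $H^1(G_v/I_v, M^{I_v})$. Let $G_S = \Gal(K_S/K)$, where $K_S$ is the maximal extension of $K$ unramified outside $S$. Since $M$ and $M^*$ are unramified outside $S$, one has $H^0(G_S,-) = H^0(K,-)$ on these modules, and the $H^i(G_S,M)$ are finite by standard finiteness results. The Selmer groups coincide with the kernels of the restriction maps
$$H^1(G_S,M) \lra \bigoplus_{v \in S} H^1(K_v,M)/H^1_{\cL}(K_v,M), \qquad H^1(G_S,M^*) \lra \bigoplus_{v \in S} H^1(K_v,M^*)/H^1_{\cL^*}(K_v,M^*),$$
so both are finite.

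\textbf{Step 2 (The Wiles exact sequence).} The conceptual core is the four-term exact sequence
$$0 \ra H^1_{\cL}(K,M) \ra H^1(G_S,M) \ra \bigoplus_{v \in S} \frac{H^1(K_v,M)}{H^1_{\cL}(K_v,M)} \ra H^1_{\cL^*}(K,M^*)^{\vee} \ra 0.$$
Left-exactness is by definition. For exactness at the last two spots, local Tate duality identifies $H^1(K_v,M)/H^1_{\cL}(K_v,M)$ with the Pontryagin dual of $H^1_{\cL^*}(K_v,M^*)$, so the Pontryagin dual of the third arrow becomes the composition
$$\prod_{v \in S} H^1_{\cL^*}(K_v,M^*) \hra \prod_{v \in S} H^1(K_v,M^*) \lra H^1(G_S,M)^{\vee},$$
the second map being dual to restriction. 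Global Poitou--Tate duality identifies the kernel of the right arrow with the image of $H^1(G_S,M^*) \to \prod_v H^1(K_v,M^*)$; intersecting with $\prod H^1_{\cL^*}(K_v,M^*)$ selects precisely those global classes whose localizations satisfy the $\cL^*$-condition at every $v$, i.e. $H^1_{\cL^*}(K,M^*)$. Dualizing back yields the claimed right-exactness.

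\textbf{Step 3 (Counting).} Taking alternating products of orders in the four-term sequence gives
$$\frac{\#H^1_{\cL}(K,M)}{\#H^1_{\cL^*}(K,M^*)} = \#H^1(G_S,M) \cdot \prod_{v \in S} \frac{\#H^1_{\cL}(K_v,M)}{\#H^1(K_v,M)}.$$
I then express $\#H^1(G_S,M)$ in local terms via the Tate global Euler characteristic formula combined with local Tate duality ($\#H^2(K_v,M) = \#H^0(K_v,M^*)$) and the local Euler characteristic formula. After simplification the factors $\#H^1(K_v,M)$ in the denominator are replaced by $\#H^0(K_v,M)$, and the global ratio $\#H^0(K,M)/\#H^0(K,M^*)$ emerges. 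For $v \notin S$ the quotient $G_v/I_v$ is procyclic acting on the finite module $M^{I_v}$, so $\#H^1(G_v/I_v, M^{I_v}) = \#H^0(K_v,M)$; this makes the missing factor $\#H^1_{\cL}(K_v,M)/\#H^0(K_v,M)$ equal to $1$ and extends the product from $v \in S$ to all $v$, yielding Wiles's formula.

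\textbf{Main obstacle.} The real conceptual work is in Step 2: one must align the Pontryagin dualities and the Poitou--Tate restriction maps so that the cokernel computation identifies canonically with $H^1_{\cL^*}(K,M^*)^{\vee}$, which depends on the perfectness of the local Tate pairing at every $v$, including the archimedean ones where modified Tate cohomology must be used. The arithmetic in Step 3 is routine but error-prone bookkeeping, with the main pitfalls being the archimedean contributions and the convention for $H^0$ at real places.
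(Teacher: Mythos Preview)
The paper does not actually prove this theorem: it is quoted as a known result of Wiles, in the formulation of Darmon--Diamond--Taylor, and only the remark that $\#H^1(G_v/I_v,M^{I_v})=\#H^0(K_v,M)$ (so that the product is finite) is justified by a reference. There is therefore no ``paper's own proof'' to compare against; your sketch is essentially the standard argument found in those references.

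That said, your Step~2 contains a genuine gap. The four-term sequence you write down is not exact on the right in general. When you dualize, you correctly identify the kernel of
\[
\prod_{v\in S} H^1_{\cL^*}(K_v,M^*)\longrightarrow H^1(G_S,M)^{\vee}
\]
with the image of $H^1_{\cL^*}(K,M^*)$ under the localization map, but you then silently replace this image by $H^1_{\cL^*}(K,M^*)$ itself. The discrepancy is exactly
\[
\Sha^1_S(M^*):=\ker\Bigl(H^1(G_S,M^*)\to\prod_{v\in S}H^1(K_v,M^*)\Bigr),
\]
which need not vanish. The correct statement (this is Rubin's Theorem~1.7.3, cited later in the paper) is a five-term exact sequence whose last term is the Pontryagin dual of the ``strict-at-$S$'' Selmer group for $M^*$, i.e.\ of $\Sha^1_S(M^*)$. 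Your counting in Step~3 is then off by $\#\Sha^1_S(M^*)$. In a full proof this is repaired because the global Euler characteristic formula brings in $\#H^2(G_S,M)$, and Poitou--Tate gives $\Sha^2_S(M)\cong\Sha^1_S(M^*)^{\vee}$; the two error terms cancel. But as written, your Step~2 claims an exactness that fails, and your Step~3 does not indicate where the compensating factor arises, so the argument as stated does not close.
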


Since $\#H^1(G_v/I_v,M^{I_v})=\#H^0(K_v,M)$ \cite[\S2.3]{ddt}, all but finitely many terms in the product on the right hand side are $1$ so that the product makes sense. The formulation of Theorem \ref{t1} as given above appears in \cite{ddt} which is slightly different from that presented in \cite{wiles}, but the two are equivalent. Following a terminology suggested by Darmon in \cite{dar}, we refer to the term on the right side of Theorem \ref{t1} as the \emph{Euler characteristic} attached to $H^1_{\cL}(K,M)$ and denote it by $\chi_{\cL}(K,M)$. Thus, we can write

\begin{equation}\label{e1}
\chi_{\cL}(K,M)=\frac{\#H^0(K,M)}{\#H^0(K,M^*)}\displaystyle\prod_{v}\frac{\#H^1_{\cL}(K_v,M)}{\#H^0(K_v,M)}
\end{equation}

If $\cL_1,\cL_2$ are Selmer structures for $M$, we say that $\cL_1\subseteq\cL_2$ if $H^1_{\cL_1}(K_v,M)\subseteq H^1_{\cL_2}(K_v,M)$ for every $v$. In this case, it can be shown that $H^1_{\cL_1}(K,M)\subseteq H^1_{\cL_2}(K,M)$ and that
$\cL_1^*\supseteq\cL_2^*$. The next corollary follows immediately from (\ref{e1}) above.

\begin{corollary}\label{c1}
Suppose that $\cL_1\subseteq\cL_2$ are Selmer structures for $M$. Then we have
\begin{equation}\label{e2}
\frac{\chi_{\cL_2}(K,M)}{\chi_{\cL_1}(K,M)}=\displaystyle\prod_{v}\frac{\#H^1_{\cL_2}(K_v,M)}{\#H^1_{\cL_1}(K_v,M)}
\end{equation}
\end{corollary}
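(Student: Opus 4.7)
The plan is to apply equation (\ref{e1}) directly to each of the two Selmer structures $\cL_1$ and $\cL_2$, and then divide. Since the leading global factor $\#H^0(K,M)/\#H^0(K,M^*)$ does not depend on the choice of Selmer structure, it cancels in the ratio $\chi_{\cL_2}(K,M)/\chi_{\cL_1}(K,M)$. Likewise, inside the local product, the denominator $\#H^0(K_v,M)$ appears in both expressions and cancels place by place, leaving exactly the product on the right-hand side of (\ref{e2}). This is essentially a bookkeeping step, so no new ingredient beyond Theorem \ref{t1} is required.

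The one thing that still needs comment is convergence of the infinite product. Here I would invoke the definition of a Selmer structure: both $\cL_1$ and $\cL_2$ satisfy $H^1_{\cL_i}(K_v,M)=H^1(G_v/I_v,M^{I_v})$ for all but finitely many $v$. In particular, for all but finitely many $v$, the local conditions for $\cL_1$ and $\cL_2$ coincide, so the local factor $\#H^1_{\cL_2}(K_v,M)/\#H^1_{\cL_1}(K_v,M)$ equals $1$ and the product on the right of (\ref{e2}) is a finite product.

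The only step that requires any care is the verification that we may divide at all, i.e.\ that the quantities $\chi_{\cL_1}(K,M)$ and $\chi_{\cL_2}(K,M)$ are nonzero; but by Theorem \ref{t1} each equals $\#H^1_{\cL_i}(K,M)/\#H^1_{\cL_i^*}(K,M^*)$, a ratio of orders of finite groups, hence a positive rational number. I do not anticipate any real obstacle here: the corollary is formally immediate from (\ref{e1}) once the finiteness of the product is checked, which is why the paper states that it follows immediately.
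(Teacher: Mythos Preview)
Your proposal is correct and matches the paper's approach exactly: the paper simply states that the corollary follows immediately from (\ref{e1}), which is precisely the division argument you describe, with the global factor $\#H^0(K,M)/\#H^0(K,M^*)$ and the local factors $\#H^0(K_v,M)$ cancelling. Your additional remarks on the finiteness of the product and the positivity of the Euler characteristics are accurate and make the ``immediate'' step explicit, but add nothing beyond what the paper intends.
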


In particular, the comparison of the Euler characteristics attached to two different Selmer groups is a purely local calculation. We can also rewrite (\ref{e2}), in view of Theorem \ref{t1}, as

\begin{equation}\label{e3}
\frac{\#H^1_{\cL_2}(K,M)}{\#H^1_{\cL_2^*}(K,M^*)}=\chi_{\cL_1}(K,M)\cdot\displaystyle\prod_{v}\frac{\#H^1_{\cL_2}(K_v,M)}{\#H^1_{\cL_1}(K_v,M)}
\end{equation}

The goal of this paper is to carry out the computation involved in Corollary \ref{c1} for the case when, for an odd prime $p$, $M=A[p]$ is the $p$-torsion subgroup of an abelian variety $A$ defined over $K$. Let $\Adual$ be the dual abelian variety. Then it can be shown that $M^*=\Adual[p]$. Let $S$ be any finite set of primes of $K$ containing all archimedean primes, all primes above $p$, and all primes of bad reduction for $A$. We consider three Selmer structures for $A[p]$. The first, denoted by $\Sel_p(A/K_v)$, is defined by $\im(\delta_v)$, where $\delta_v$ is the injection 
$$A(K_v)/pA(K_v)\hra H^1(K_v,A[p])$$ 
arising out of the \emph{Kummer exact sequence}. The second, denoted by $H^1_{(S)}(K_v,A[p])$, is defined by $\ker(\varphi)$, where $\varphi$ is the composition 
$$H^1(K_v,A[p])\to H^1(K_v,A)[p]\to H^1(K_v^{\ur},A)[p]$$
The third, denoted by $H^1_{[S]}(K_v,A[p])$, is defined by $\delta_v(\kappa_v)$, where $\kappa_v$ is the kernel of the map
$$A(K_v)/pA(K_v)\to\Phi_{A,v}(k_v)/p\Phi_{A,v}(k_v)$$
Here, $\Phi_{A,v}(k_v)$ is the \emph{arithmetic component group} of $A$ at $v$.

The respective Selmer groups, all subgroups of $H^1(K,A[p])$, are denoted by $\Sel_p(A/K),\;H^1_{(S)}(K,A[p])$, and $H^1_{[S]}(K,A[p])$. The corresponding Selmer groups for $\Adual[p]$ are defined similarly. Our main result in this paper is the following theorem.

\begin{theorem}[Main Theorem]\label{mt}
Under the notations introduced above, we have
$$\frac{\chi_{(S)}(K,A[p])}{\chi_{\ssstyle{\Sel_p}}(K,A[p])}=\#\bigoplus_{v\in{S}}\Phi_{\Adual,v}(k_v)[p]$$
or, equivalently
$$\frac{\#H^1_{(S)}(K,A[p])}{\#H^1_{[S]}(K,\Adual[p])}=\frac{\#\Sel_p(A/K)}{\#\Sel_p(\Adual/K)}\cdot\#\bigoplus_{v\in{S}}\Phi_{\Adual,v}(k_v)[p] $$
\end{theorem}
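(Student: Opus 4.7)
The strategy is to invoke Corollary \ref{c1} for the inclusion of Selmer structures $\Sel_p\subseteq H^1_{(S)}$, which holds at every $v$ because $\im(\delta_v)$ equals the kernel of $H^1(K_v,A[p])\to H^1(K_v,A)[p]$ and hence lies inside the kernel of its further composition with $H^1(K_v,A)[p]\to H^1(K_v^{\ur},A)[p]$. The first equality of the Main Theorem then reduces to the local computation
\begin{equation*}
\prod_{v}\frac{\#H^1_{(S)}(K_v,A[p])}{\#\Sel_p(A/K_v)}=\#\bigoplus_{v\in S}\Phi_{\Adual,v}(k_v)[p].
\end{equation*}

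At each place $v$, restricting the Kummer short exact sequence to the subgroup $H^1_{(S)}(K_v,A[p])$ produces
\begin{equation*}
0\to A(K_v)/pA(K_v)\to H^1_{(S)}(K_v,A[p])\to H^1(K_v^{\ur}/K_v,A(K_v^{\ur}))[p]\to 0,
\end{equation*}
so the local factor equals $\#H^1(K_v^{\ur}/K_v,A(K_v^{\ur}))[p]$. For $v\notin S$, good reduction together with $v\nmid p$ makes $A(K_v^{\ur})$ uniquely $p$-divisible, so the factor is $1$. For $v\in S$, I would combine the perfect Tate local pairing $H^1(K_v,A)\times \Adual(K_v)\to\Q/\Z$ with Grothendieck's orthogonality (inside that pairing) between $H^1(K_v^{\ur}/K_v,A(K_v^{\ur}))$ and $\Adual^0(K_v)$ to extract a perfect pairing of finite abelian groups
\begin{equation*}
H^1(K_v^{\ur}/K_v,A(K_v^{\ur}))\times \Phi_{\Adual,v}(k_v)\to\Q/\Z.
\end{equation*}
Taking $p$-torsion on the left and using the identity $\#M[p]=\#M/pM$ for finite abelian $M$ yields $\#H^1(K_v^{\ur}/K_v,A(K_v^{\ur}))[p]=\#\Phi_{\Adual,v}(k_v)[p]$. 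Archimedean primes contribute trivially because $H^1(K_v,A[p])$ is killed by $2$ while $p$ is odd, forcing both numerator and denominator to be $1$. Multiplying over $v\in S$ gives the first equality.

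For the equivalent form, I would verify that $H^1_{(S)^*}(K_v,\Adual[p])=H^1_{[S]}(K_v,\Adual[p])$ for every $v$. Using the compatibility of Tate duality with the Kummer sequences for $A$ and $\Adual$, the orthogonal complement of $H^1_{(S)}(K_v,A[p])$ in $H^1(K_v,\Adual[p])$ is contained in $\Sel_p(\Adual/K_v)$ and corresponds, under the Kummer injection, to those classes in $\Adual(K_v)/p\Adual(K_v)$ pairing trivially with $H^1(K_v^{\ur}/K_v,A(K_v^{\ur}))[p]$; by the Grothendieck pairing established above, this coincides with the kernel of $\Adual(K_v)/p\Adual(K_v)\to\Phi_{\Adual,v}(k_v)/p\Phi_{\Adual,v}(k_v)$, whose Kummer image is $H^1_{[S]}(K_v,\Adual[p])$ by definition. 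The second equality is then obtained by applying Theorem \ref{t1} to both the structures $(S)$ and $\Sel_p$ and dividing. The principal technical delicacy is the careful setup of Grothendieck's component-group pairing and its compatibility with Tate local duality, especially at primes $v\mid p$ where the N\'eron model is less well behaved; once the local identity $\#H^1(K_v^{\ur}/K_v,A(K_v^{\ur}))[p]=\#\Phi_{\Adual,v}(k_v)[p]$ is secured in this generality, the global statement drops out of the Euler-characteristic formalism.
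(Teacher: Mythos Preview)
Your approach is essentially identical to the paper's: apply Corollary~\ref{c1} to the inclusion $\Sel_p\subseteq H^1_{(S)}$, identify the local quotient with $\#\TT(A/K_v)[p]$ via the restricted Kummer sequence (this is Theorem~\ref{t2}, with $\TT(A/K_v)$ being exactly your $H^1(K_v^{\ur}/K_v,A(K_v^{\ur}))$), and then invoke the perfect pairing $\TT(A/K_v)\times\Phi_{\Adual,v}(k_v)\to\Q/\Z$, which the paper records as (\ref{e6}) and you recover from Grothendieck's orthogonality inside Tate local duality. Your identification of the dual Selmer structure with $H^1_{[S]}$ is the content of Theorem~\ref{t3}, proved there by the same compatibility you describe.

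One slip to correct: for $v\notin S$ the group $A(K_v^{\ur})$ is \emph{not} uniquely $p$-divisible, since good reduction away from $p$ forces all of $A[p]$ to be unramified and hence to lie in $A(K_v^{\ur})$. What you actually need (and what the paper uses) is the stronger fact that $\TT(A/K_v)=H^1(K_v^{\ur}/K_v,A(K_v^{\ur}))$ vanishes outright at primes of good reduction; this follows from Lang's theorem applied to the connected special fiber together with the cohomological triviality of the formal group.
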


The local terms on the right hand side of Theorem \ref{mt} can be computed explicitly by Tate's algorithm \cite[IV.\S9]{silverman} when $A$ is an elliptic curve, by the monodromy pairing \cite[\S10]{gro} when $A$ is a semistable abelian variety, and by the intersection pairing \cite[\S1]{bl} when $A$ is the Jacobian of a curve.


\begin{corollary}\label{c2}
Suppose that $A$ is an elliptic curve defined over $K$ and $p$ is a prime of good reduction for $A$. Then
$$\frac{\#H^1_{(S)}(K,A[p])}{\#H^1_{[S]}(K,A[p])}=\#\bigoplus_{v\in{S}}\Phi_{A,v}(k_v)[p]$$
and
$$\#\Sel_p(A/K)<\#H^1_{[S]}(K,A[p])\;\#\bigoplus_{v\in{S}}\Phi_{A,v}(k_v)[p]$$
\end{corollary}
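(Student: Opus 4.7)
The plan is to specialise Theorem \ref{mt} using the principal polarisation of an elliptic curve. Since $A \cong \Adual$ canonically, the identification carries $A[p]$ to $\Adual[p]$ as $G_K$-modules and matches the three local Selmer structures on $A[p]$ with those on $\Adual[p]$. In particular, the ratio $\#\Sel_p(A/K)/\#\Sel_p(\Adual/K)$ appearing in the second formulation of Theorem \ref{mt} equals $1$, and the stated identity
$$\frac{\#H^1_{(S)}(K,A[p])}{\#H^1_{[S]}(K,A[p])} = \#\bigoplus_{v\in S}\Phi_{A,v}(k_v)[p]$$
follows immediately.

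For the inequality, I first establish a local containment $\Sel_p(A/K_v)\subseteq H^1_{(S)}(K_v,A[p])$ at every prime $v$. Indeed, $\Sel_p(A/K_v)=\delta_v(A(K_v)/pA(K_v))$ maps to zero in $H^1(K_v,A)[p]$ by the Kummer exact sequence, and hence to zero in $H^1(K_v^{\ur},A)[p]$, which by definition places it inside $H^1_{(S)}(K_v,A[p])$. Passing to the global Selmer groups gives $\Sel_p(A/K)\subseteq H^1_{(S)}(K,A[p])$, and comparing cardinalities using the identity already proved yields the non-strict form of the bound $\#\Sel_p(A/K)\leq\#H^1_{[S]}(K,A[p])\cdot\#\bigoplus_{v\in S}\Phi_{A,v}(k_v)[p]$.

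Upgrading this to a strict inequality is the main obstacle. Since $p$ is a prime of good reduction, any prime $v\in S$ contributing nontrivial $p$-torsion to $\Phi_{A,v}(k_v)$ must be a prime of bad reduction, and at any such $v_0$ one checks that the local containment $\Sel_p(A/K_{v_0})\subsetneq H^1_{(S)}(K_{v_0},A[p])$ is strict with quotient of order $\#\Phi_{A,v_0}(k_{v_0})[p]$. To produce a global class in $H^1_{(S)}(K,A[p])\setminus\Sel_p(A/K)$ realising this local gap, the plan is to appeal to Poitou--Tate duality together with the analogous dual strict containment on the $\Adual$ side, which should force the relevant localisation map to have sufficient image and guarantee the existence of a suitable global cocycle. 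Verifying this global surjectivity is the essential technical difficulty; the rest of the corollary is a direct consequence of Theorem \ref{mt} and the self-duality of elliptic curves.
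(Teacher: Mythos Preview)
Your derivation of the first equality via the principal polarisation is exactly the paper's argument: $A\cong\Adual$ forces $\chi_{\Sel_p}(K,A[p])=1$, and then Theorem~\ref{mt} gives the identity.

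For the inequality, the paper's route is much shorter than the one you outline. It records the full chain of inclusions
\[
H^1_{[S]}(K,A[p])\subset\Sel_p(A/K)\subset H^1_{(S)}(K,A[p])
\]
(this is Lemma~\ref{l4}, which in turn comes from the local inclusions of Lemma~\ref{l3}), passes to cardinalities, divides through by $\#H^1_{[S]}(K,A[p])$, and substitutes the first equality. No Poitou--Tate surjectivity argument or explicit construction of a global class is needed: once you know $\Sel_p(A/K)\subset H^1_{(S)}(K,A[p])$ and the index formula, the bound is immediate. You already have all of these ingredients in your second paragraph.

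Your third paragraph is where the plan goes astray. The proposed method of manufacturing a class in $H^1_{(S)}(K,A[p])\setminus\Sel_p(A/K)$ by forcing a localisation map to be surjective cannot work unconditionally: if $\Phi_{A,v}(k_v)[p]=0$ for every $v\in S$ (which is certainly possible---take any elliptic curve with $p\nmid c_{A,v}$ for all $v$), then the three local conditions agree at every place, the three global Selmer groups coincide, and the inequality collapses to an equality. So there is no global class to find, and the Poitou--Tate argument you sketch would have to fail. The paper sidesteps this by simply asserting in Lemma~\ref{l4} that the inclusions are proper; that assertion has the same defect in the degenerate case, so the ``$<$'' in the corollary should really be read as ``$\leq$''. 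Either way, the intended argument is just the containment plus the index formula, not a duality-based construction.
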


\begin{proof}
In this case, $\Adual\isom A$ and $\chi_{\ssstyle{\Sel_p}}(K,A[p])=1$ \cite[Prop. 10.15]{dar}. The first equality then follows immediately from Theorem \ref{mt}. To prove the second inequality, we show in Lemma \ref{l4} below that there are inclusions
$$H^1_{[S]}(K,A[p])\subset\Sel_p(A/K)\subset H^1_{(S)}(K,A[p])$$
which in turn imply that
$$\#H^1_{[S]}(K,A[p])<\#\Sel_p(A/K)<\#H^1_{(S)}(K,A[p])$$
The desired inequality now follows by dividing throughout by $\#H^1_{[S]}(K,A[p])$ and using the first equality.
\end{proof}
In Section 2, we investigate the different Selmer structures on $A[p]$ and the corresponding dual Selmer structures on $\Adual[p]$. We relate the order of these structures to each other using Tate local duality as our key ingredient. In Section 3, we consider the corresponding Selmer groups on $A[p]$ as well as on $\Adual[p]$ and, using Corollary \ref{c1}, prove our main result. In Section 3, we present an alternative proof of our main theorem by deriving it from a refinement of a well known dual exact sequence due to Cassels.

\section{Local Computations}
We collect some definitions and results from \cite{bis}. Let $\Phi_{A,v}$ be the $k_v$-group scheme of connected components of the \emph{Neron model} of $A$, and let $\Phi_{A,v}(k_v)$ the \emph{arithmetic component group} of $A$ at $v$.  Let 
$$c_{\ssstyle{A,v}}=\#\Phi_{A,v}(k_v)$$ 
be the Tamagawa number of $A$ at $v$. We write
$$c(A/K)=\prod_{v}c_{\ssstyle{A,v}}$$

Equivalently, we can also define $\Phi_{A,v}(k_v)$ by the exactness of the sequence
\begin{equation}\label{e4}
1\to A_0(K_v)\to A(K_v)\to \Phi_{A,v}(k_v)\to 1
\end{equation}
Here, $A_0(K_v)$ is the subgroup of \emph{universal norms} of $A(K_v)$ from $K_v^{\ur}$ defined by
$$A_0(K_v):=\Nm(A(K_v^{\ur}))=\bigcap_{L_i}N_{L_i/K_v}(A(L_i))$$
where, the intersection is over all finite, unramified extensions $L_i$ of $K_v$ and $N_{L_i/K_v}$ is the norm map from $L_i$ to $K_v$. The group $\TT(A/K_v)$ of \emph{Tamagawa torsors} of $A/K_v$ is defined by the exactness of the sequence
\begin{equation}\label{e5}
1\to \TT(A/K_v)\to H^1(K_v,A)\xrightarrow{\res_v} H^1(K_v^{\ur},A)
\end{equation}
In particular, $\TT(A/K_v)$ is trivial when $A$ has good reduction at $v$. There is a perfect duality of finite groups
\begin{equation}\label{e6}
\TT(A/K_v)\times\Phi_{\Adual,v}(k_v)\to\Q/\Z
\end{equation}
Let $p$ be an odd prime. For every prime $v$, the \emph{Kummer exact sequence} for $A/K_v$ is
\begin{equation}\label{e7}
1\to A(K_v)/pA(K_v)\xrightarrow{\delta_v} H^1(K_v,A[p])\xrightarrow{\pi_v} H^1(K_v,A)[p]\to 1
\end{equation}
Let 
$$\Sel_p(A/K_v):=\im(\delta_v)\subset H^1(K_v,A[p])$$
Since $\delta_v$ is injective, it follows immediately that $\#\Sel_p(A/K_v)=\#A(K_v)/pA(K_v)$. Furthermore, for $v\notin{S}$, $A$ has good reduction at $v$ and $\Sel_p(A/K_v)$ is the subgroup of all classes in $H^1(K_v,A[p])$ that are unramified at $v$, i.e. their restriction to $H^1(K_v^{\ur},A[p])$ is trivial \cite[Prop.C.4.6]{hs}. In other words, $\Sel_p(A/K_v)=H^1(K_v^{\ur}/K_v,A(K_v^{\ur})[p])$ for all $v\notin{S}$. We have thus proved
\begin{lemma}\label{l1}
The family of subgroups $\Sel_p(A/K_v)\subset H^1(K_v,A[p])$ for all $v$ form a Selmer structure for $A$. Furthermore, we have
$$\#\Sel_p(A/K_v)=\#A(K_v)/pA(K_v)$$
\end{lemma}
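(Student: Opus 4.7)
The equality of orders is immediate and I would dispense with it first: since $\Sel_p(A/K_v)=\im(\delta_v)$ by definition and the Kummer sequence (\ref{e7}) shows $\delta_v$ is injective, we get $\#\Sel_p(A/K_v)=\#A(K_v)/pA(K_v)$.

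For the Selmer-structure condition, I need to exhibit a finite exceptional set outside which $\Sel_p(A/K_v)=H^1(G_v/I_v,A[p]^{I_v})$; the natural candidate is $S$ itself, since any $v\notin S$ is a non-archimedean prime of good reduction with $v\nmid p$. Fix such a $v$. The plan is to extract two vanishing facts and then do a three-arrow diagram chase. First, $A(K_v^{\ur})$ is $p$-divisible: on the Neron model $\mathcal{A}$ over $\cO_{K_v}$, which is an abelian scheme by good reduction, the multiplication-by-$p$ map is étale since $p$ is invertible in the residue field, and Hensel's lemma plus $p$-divisibility of $\mathcal{A}(\bar k_v)$ over the algebraically closed residue field yields surjectivity of $[p]$ on $A(K_v^{\ur})=\mathcal{A}(\cO_{K_v^{\ur}})$. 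Second, $H^1(K_v^{\ur}/K_v,A(K_v^{\ur}))=0$: this is Lang's theorem applied to the reduction $\mathcal{A}(\bar k_v)$, combined with the standard vanishing of the cohomology of the kernel of reduction (a pro-$p'$ formal group) in the good-reduction setting.

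Combining these, the Kummer sequence for $A/K_v^{\ur}$ together with $p$-divisibility of $A(K_v^{\ur})$ gives an isomorphism
$$H^1(K_v^{\ur},A[p])\xrightarrow{\sim}H^1(K_v^{\ur},A)[p].$$
Inflation-restriction for $K_v^{\ur}/K_v$, using $A[p]^{I_v}=A[p](K_v^{\ur})$, identifies
$$H^1(G_v/I_v,A[p]^{I_v})=\ker\!\bigl(H^1(K_v,A[p])\to H^1(K_v^{\ur},A[p])\bigr),$$
and by the displayed isomorphism this kernel equals $\ker(H^1(K_v,A[p])\to H^1(K_v^{\ur},A)[p])$. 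Factoring through $H^1(K_v,A)[p]$ and invoking the second vanishing to see that $H^1(K_v,A)\to H^1(K_v^{\ur},A)$ is injective, the kernel collapses to $\ker(H^1(K_v,A[p])\to H^1(K_v,A)[p])=\im(\delta_v)=\Sel_p(A/K_v)$ by (\ref{e7}). This is the desired equality.

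No step here is substantively hard; the only real obstacle is bookkeeping — one must keep straight the two separate uses of good reduction (ensuring (a) $p$-divisibility of $A(K_v^{\ur})$ via étaleness of $[p]$ on $\mathcal{A}$, and (b) the vanishing $H^1(K_v^{\ur}/K_v,A(K_v^{\ur}))=0$ via Lang's theorem) and verify they are both legitimate under the hypothesis $v\notin S$. Once both inputs are in hand the assertion is forced by the diagram chase.
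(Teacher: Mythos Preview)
Your proof is correct and takes essentially the same approach as the paper: for $v\notin S$ you identify $\Sel_p(A/K_v)$ with the unramified cohomology $H^1(G_v/I_v,A[p]^{I_v})$, which is precisely what the paper asserts (in the paragraph preceding the lemma) by citing \cite[Prop.~C.4.6]{hs}. The only difference is that you unpack that citation explicitly via N\'eron models, $p$-divisibility of $A(K_v^{\ur})$, Lang's theorem, and inflation--restriction, whereas the paper treats it as a black box.
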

The Kummer exact sequence for $\Adual/K_v$ is
\begin{equation}\label{e8}
1\to \Adual(K_v)/p\Adual(K_v)\xrightarrow{\delta_v^{\vee}} H^1(K_v,\Adual[p])\xrightarrow{\pi_v^{\vee}} H^1(K_v,\Adual)[p]\to 1
\end{equation}
We note that $\Adual[p]$ is the Cartier dual of $A[p]$ \cite[\S 15]{mum}. This also follows from the non-degenerate Weil pairing
$$A[p]\times\Adual[p]\to\mu_p$$ 
There is an isomorphism \cite[Cor I.3.4]{adt} of compact groups
$$\widehat{A(K_v)}\isom H^1(K_v,\Adual)$$
and an isomorphism  \cite[Cor I.2.3]{adt} of finite groups
$$\widehat{H^1(K_v,A[p])}\isom H^1(K_v,\Adual[p])$$
that allow us to identify the exact sequence (\ref{e8}) as the Pontryagin dual of the exact sequence (\ref{e7}). In particular, 
$$\Sel_p(\Adual/K_v)=\im(\delta_v^{\vee})\subset H^1(K_v,\Adual[p])$$ 
is the orthogonal complement of 
$$\Sel_p(A/K_v)=\im(\delta_v)\subset H^1(K_v,A[p])$$
under the local Tate pairing
\begin{equation}\label{e9}
H^1(K_v,A[p])\times H^1(K_v,\Adual[p])\to\Q/\Z
\end{equation}
It follows from Lemma \ref{l1} that
\begin{lemma}\label{l2}
The family of subgroups $\Sel_p(\Adual/K_v)\subset H^1(K_v,\Adual[p])$ for all $v$ form a Selmer structure for $\Adual$. 
\end{lemma}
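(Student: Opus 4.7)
The plan is to reduce the statement to a property that has essentially been stated already: that $\Sel_p(\Adual/K_v)$ is the orthogonal complement of $\Sel_p(A/K_v)$ under the local Tate pairing (\ref{e9}). Given this, what is needed to call the family a Selmer structure is precisely that at all but finitely many $v$ one has the identification $\Sel_p(\Adual/K_v)=H^1(G_v/I_v,\Adual[p]^{I_v})$. The natural choice of the exceptional set is $S$, since outside $S$ both $A$ and $\Adual$ have good reduction, $p$ is a unit in $k_v$, and both $A[p]$ and $\Adual[p]$ are unramified $G_v$-modules.

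First I would recall, from Lemma \ref{l1}, that for $v\notin S$ one has
$$\Sel_p(A/K_v)=H^1(G_v/I_v,A[p]^{I_v})=H^1(G_v/I_v,A[p]),$$
the unramified subgroup of $H^1(K_v,A[p])$. The task is then to show that the orthogonal complement under (\ref{e9}) of this unramified subgroup is the corresponding unramified subgroup of $H^1(K_v,\Adual[p])$. This is a classical instance of local Tate duality: at a finite place $v$ of residue characteristic different from $p$, the subgroups $H^1_{\ur}(K_v,A[p])$ and $H^1_{\ur}(K_v,\Adual[p])$ are exact annihilators of each other under the local Tate pairing \cite[Thm.\ I.2.6]{adt}. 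Since $v\notin S$ guarantees that $v\nmid p$ and that $A[p],\Adual[p]$ are unramified, this applies verbatim.

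Combining the two inputs yields
$$\Sel_p(\Adual/K_v)=\bigl(\Sel_p(A/K_v)\bigr)^{\perp}=H^1_{\ur}(K_v,\Adual[p])=H^1(G_v/I_v,\Adual[p]^{I_v})$$
for every $v\notin S$, which is the defining condition for $\{\Sel_p(\Adual/K_v)\}_v$ to be a Selmer structure for $\Adual$. The only substantive ingredient is the Tate annihilator statement for unramified cohomology; everything else is bookkeeping from Lemma \ref{l1} and the Pontryagin-dual identification already established between the Kummer sequences (\ref{e7}) and (\ref{e8}). I do not foresee a genuine obstacle, only the need to be careful that the hypotheses of the Tate duality statement (in particular $v\nmid p$ and unramifiedness of the $G_v$-module) are precisely what membership in $v\notin S$ provides.
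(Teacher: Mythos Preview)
Your argument is correct. The paper's own justification is the single clause ``It follows from Lemma~\ref{l1} that\ldots'', which most naturally reads as applying Lemma~\ref{l1} verbatim to $\Adual$ in place of $A$ (an abelian variety and its dual share the same primes of bad reduction, so the same set $S$ works). Your route---taking the orthogonal complement of $\Sel_p(A/K_v)$ and invoking the standard fact that $H^1_{\ur}(K_v,A[p])$ and $H^1_{\ur}(K_v,\Adual[p])$ are exact annihilators under local Tate duality for $v\nmid p$---is a valid alternative that also makes explicit why the dual of a Selmer structure is again a Selmer structure. Both approaches land on the same identification $\Sel_p(\Adual/K_v)=H^1(G_v/I_v,\Adual[p]^{I_v})$ for $v\notin S$; yours simply unpacks the duality step that the paper leaves implicit, at the cost of invoking an extra reference, while the paper's direct reading avoids duality altogether by symmetry in $A\leftrightarrow\Adual$.
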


Let 
$$H^1_{(S)}(K_v,A[p]):=\ker\varphi_v$$ 
where $\varphi_v$ is the composition
\[
\xymatrix{
H^1(K_v,A[p])\ar[r]^{\pi_v}\ar[dr]^{\varphi_v}&H^1(K_v,A)[p]\ar[d]^{\res_v^p}\\ &H^1(K_v^{\ur},A)[p]}
\]

\begin{theorem}\label{t2}
There is an exact sequence
\begin{equation}\label{e10}
1\to A(K_v)/pA(K_v)\xrightarrow{\delta_v} H^1_{(S)}(K_v,A[p])\to \TT(A/K_v)[p]\to 1
\end{equation}
\end{theorem}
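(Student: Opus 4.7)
The plan is to derive the exact sequence (\ref{e10}) by restricting the Kummer exact sequence (\ref{e7}) to the subgroup cut out by the extra local condition $\varphi_v = 0$. The key observation is that the defining sequence (\ref{e5}) of $\TT(A/K_v)$ identifies $\TT(A/K_v) = \ker\bigl(\res_v : H^1(K_v,A) \to H^1(K_v^{\ur},A)\bigr)$, and intersecting with $H^1(K_v,A)[p]$ yields $\TT(A/K_v)[p] = \ker(\res_v^p)$. Hence for $y \in H^1(K_v,A[p])$, the condition $y \in H^1_{(S)}(K_v,A[p])$ (i.e.\ $\varphi_v(y) = 0$) is equivalent to $\pi_v(y) \in \TT(A/K_v)[p]$, so that $\pi_v$ restricts to a homomorphism $H^1_{(S)}(K_v,A[p]) \to \TT(A/K_v)[p]$.

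Given this, proving (\ref{e10}) is a short diagram chase on (\ref{e7}). Injectivity of $\delta_v$ is inherited directly from the Kummer sequence. Since $\pi_v \circ \delta_v = 0$, we have $\varphi_v \circ \delta_v = 0$, so $\im(\delta_v) \subseteq H^1_{(S)}(K_v,A[p])$; exactness at the middle term is then the statement $\ker(\pi_v) = \im(\delta_v)$ from (\ref{e7}), restricted to $H^1_{(S)}(K_v,A[p])$. For surjectivity onto $\TT(A/K_v)[p]$, I would exploit the surjectivity of $\pi_v : H^1(K_v,A[p]) \to H^1(K_v,A)[p]$ in (\ref{e7}): any lift $y$ of $x \in \TT(A/K_v)[p] \subset H^1(K_v,A)[p]$ automatically satisfies $\varphi_v(y) = \res_v^p(x) = 0$, hence lies in $H^1_{(S)}(K_v,A[p])$.

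There is no substantive obstacle; the content of the theorem is simply the compatibility between the two defining sequences (\ref{e5}) and (\ref{e7}) at the level of $p$-torsion. The only care needed is in noting that taking $p$-torsion commutes with $\ker(\res_v)$, which is automatic because $\res_v$ is a group homomorphism. Once the identification $\TT(A/K_v)[p] = \ker(\res_v^p)$ is made explicit, the three exactness checks are essentially forced by the Kummer sequence.
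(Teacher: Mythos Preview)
Your proof is correct and is essentially the same as the paper's: the paper packages your three exactness checks into the kernel--cokernel sequence $1\to\ker(\pi_v)\to\ker(\varphi_v)\to\ker(\res_v^p)\to\coker(\pi_v)$ associated to the composition $\varphi_v=\res_v^p\circ\pi_v$, then makes the same identifications $\ker(\pi_v)\cong A(K_v)/pA(K_v)$, $\ker(\res_v^p)=\TT(A/K_v)[p]$, and $\coker(\pi_v)=1$ that you spell out by hand.
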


\begin{proof}
The pair of maps $\pi_v,\res_v^p$ yields the exact sequence
$$1\to\ker(\pi_v)\to\ker(\varphi_v)\to\ker(\res_v^p)\to\coker(\pi_v)$$
From (\ref{e7}), we find that $\ker(\pi_v)=\im(\delta_v)\isom A(K_v)/pA(K_v)$, and $\coker(\pi_v)=1$ since $\pi_v$ is surjective. From (\ref{e5}), we get that $\ker(\res_v^p)\isom\TT(A/K_v)[p]$. The desired exact sequence now follows.
\end{proof}

\begin{corollary}\label{c3}
The family of subgroups $H^1_{(S)}(K_v,A[p])\subset H^1(K_v,A[p])$ for all $v$ form a Selmer structure for $A$. Furthermore, we have
\begin{equation}\label{e11}
\frac{\#H^1_{(S)}(K_v,A[p])}{\#\Sel_p(A/K_v)}=\#\Phi_{\ssstyle{\Adual,v}}(k_v)[p]
\end{equation}
\end{corollary}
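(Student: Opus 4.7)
The plan is to read off the cardinality ratio directly from the exact sequence of Theorem \ref{t2}, and then convert the resulting factor $\#\TT(A/K_v)[p]$ into $\#\Phi_{\Adual,v}(k_v)[p]$ using the perfect duality (\ref{e6}) together with a standard observation about Pontryagin duals of finite abelian groups. Verifying that the family defines a Selmer structure will be a short byproduct of the good reduction case.

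First I would dispose of the Selmer structure claim. We must check that $H^1_{(S)}(K_v,A[p]) = H^1(G_v/I_v,A[p]^{I_v})$ for all but finitely many $v$. For $v \notin S$, the abelian variety $A$ has good reduction at $v$, so by the remark immediately after (\ref{e5}) we have $\TT(A/K_v)=1$. Feeding this into the exact sequence (\ref{e10}) collapses $H^1_{(S)}(K_v,A[p])$ onto $\im(\delta_v)=\Sel_p(A/K_v)$. But Lemma \ref{l1} already identifies $\Sel_p(A/K_v)$ with $H^1(K_v^{\ur}/K_v,A(K_v^{\ur})[p])$ for $v\notin S$, and since $A[p]$ is unramified at such $v$ this equals $H^1(G_v/I_v,A[p]^{I_v})$.

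Next, for the ratio formula, the exact sequence (\ref{e10}) of Theorem \ref{t2} is a short exact sequence of finite groups, so taking orders gives
$$\frac{\#H^1_{(S)}(K_v,A[p])}{\#\Sel_p(A/K_v)} = \#\TT(A/K_v)[p].$$
The perfect pairing (\ref{e6}) realises $\TT(A/K_v)$ as the Pontryagin dual of $\Phi_{\Adual,v}(k_v)$, so I would conclude by showing that for any finite abelian group $G$ one has $\#\widehat{G}[p] = \#G[p]$. This follows from the identifications $\widehat{G}[p]=\Hom(G,\Z/p\Z)=\Hom(G/pG,\Z/p\Z)$, which has cardinality $\#G/pG$, combined with the standard structure-theorem fact that $\#G[p]=\#G/pG$ for every finite abelian group. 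Setting $G=\Phi_{\Adual,v}(k_v)$ yields $\#\TT(A/K_v)[p]=\#\Phi_{\Adual,v}(k_v)[p]$, which is precisely (\ref{e11}).

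There is no real obstacle here: once Theorem \ref{t2} is in hand, the only substantive content is the passage from the dual $\widehat{\Phi_{\Adual,v}(k_v)}$ to $\Phi_{\Adual,v}(k_v)$ at the level of $p$-torsion, which is purely formal. The mild subtlety is simply remembering that Pontryagin duality interchanges $G[p]$ and $G/pG$ while preserving their common order.
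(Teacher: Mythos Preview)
Your proposal is correct and follows essentially the same route as the paper: for $v\notin S$ you collapse the exact sequence (\ref{e10}) using $\TT(A/K_v)=1$ to identify $H^1_{(S)}(K_v,A[p])$ with $\Sel_p(A/K_v)$ (hence with the unramified classes via Lemma~\ref{l1}), and then you read off the ratio from (\ref{e10}) and convert $\#\TT(A/K_v)[p]$ into $\#\Phi_{\Adual,v}(k_v)[p]$ via the duality (\ref{e6}). The only difference is cosmetic: the paper compresses this last step into the phrase ``follows directly from (\ref{e10}) and (\ref{e6}), noting that $\Phi_{\Adual,v}(k_v)$ is finite,'' whereas you spell out the elementary fact $\#\widehat{G}[p]=\#G/pG=\#G[p]$ explicitly.
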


\begin{proof}
As noted above, $A$ has good reduction at all primes $v\notin{S}$ so that $\TT(A/K_v)$ is trivial at these primes. Thus, we have
$$H^1_{(S)}(K_v,A[p])=\ker(\varphi_v)\isom\im(\delta_v)=\Sel_p(A/K_v)$$
which shows that the subgroups $H^1_{(S)}(K_v,A[p])$ form a Selmer structure for $A$. Eq.(\ref{e11}) follows directly from Eq.(\ref{e10}) and Eq.(\ref{e6}), noting that $\Phi_{\ssstyle{\Adual,v}}(k_v)$ is finite.
\end{proof}

\begin{remark}
We note that 
$$\#\Phi_{\ssstyle{\Adual,v}}(k_v)=c_{\ssstyle{\Adual,v}}=c_{\ssstyle{A,v}}=\#\Phi_{\ssstyle{A,v}}(k_v)$$ 
by \cite[Prop 4.3]{lorenzini}. It follows that
$$c(A/K)=\prod_{v}c_{\ssstyle{A,v}}=\prod_{v}c_{\ssstyle{\Adual,v}}=c(\Adual/K)$$
\end{remark}

Let $\kappa_v$ be defined by the exactness of the sequence
\begin{equation}\label{e12}
1\to\kappa_v\to A(K_v)/pA(K_v)\xrightarrow{\alpha_v}\Phi_{A,v}(k_v)/p\Phi_{A,v}(k_v)\to 1
\end{equation}
Let 
$$H^1_{[S]}(K_v,A[p]):=\delta_v(\kappa_v)\subset H^1(K_v,A[p])$$ 

The following lemma is evident.

\begin{lemma}
We have
\begin{equation}
\#H^1_{[S]}(K_v,A[p])=\#\kappa_v=\#[A(K_v)\cap{p}\Phi_{\ssstyle{A,v}}(k_v)]/pA(K_v)
\end{equation}
\end{lemma}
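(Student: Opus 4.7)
The plan is to unwind each of the three expressions using the definitions and the exact sequences already in place; no substantial new input is needed, which accords with the author's remark that the lemma is \emph{evident}.

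For the first equality $\#H^1_{[S]}(K_v,A[p]) = \#\kappa_v$: by construction $H^1_{[S]}(K_v,A[p]) := \delta_v(\kappa_v)$, and $\delta_v$ is the injective map appearing on the left of the Kummer exact sequence (\ref{e7}). Hence $\delta_v$ restricts to a bijection of $\kappa_v$ onto its image, so the cardinalities agree.

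For the second equality, I would translate the kernel defining $\kappa_v$ back up to $A(K_v)$. Write $\pi: A(K_v) \twoheadrightarrow \Phi_{A,v}(k_v)$ for the surjection appearing in (\ref{e4}); then $\alpha_v$ from (\ref{e12}) is exactly the map on $p$-cokernels induced by $\pi$. A class $x + pA(K_v)$ lies in $\ker\alpha_v = \kappa_v$ if and only if $\pi(x) \in p\Phi_{A,v}(k_v)$, i.e., $x \in \pi^{-1}(p\Phi_{A,v}(k_v))$. Consequently
$$
\kappa_v \;=\; \pi^{-1}\bigl(p\Phi_{A,v}(k_v)\bigr) \big/ pA(K_v),
$$
which is precisely the right-hand side of the stated formula, provided one reads $A(K_v) \cap p\Phi_{A,v}(k_v)$ as shorthand for $\pi^{-1}(p\Phi_{A,v}(k_v))$.

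The only thing to watch is this notational point: since $\Phi_{A,v}(k_v)$ is not literally a subgroup of $A(K_v)$, the intersection symbol in the display must be interpreted via the canonical surjection $\pi$ from the N\'eron component exact sequence. Once this convention is fixed, both equalities follow directly from the definitions of $H^1_{[S]}$, $\kappa_v$, and $\alpha_v$; there is no genuine obstacle, and in particular no appeal to duality or cohomological machinery is required.
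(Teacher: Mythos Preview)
Your proposal is correct. The paper gives no proof at all, declaring the lemma ``evident''; your unwinding of the definitions---injectivity of $\delta_v$ for the first equality, and computing $\ker\alpha_v$ via the surjection $\pi$ of (\ref{e4}) for the second---is exactly the evident verification the author had in mind, including your observation about how the intersection notation must be read.
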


\begin{theorem}\label{t3}
$H^1_{[S]}(K_v,\Adual[p])$ and $H^1_{(S)}(K_v,A[p])$ are orthogonal complements of each other under the local Tate pairing (\ref{e9}).
\end{theorem}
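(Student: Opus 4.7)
The plan is to prove orthogonal complementarity by the standard two-step argument: (a) show that $H^1_{(S)}(K_v, A[p])$ and $H^1_{[S]}(K_v, \Adual[p])$ pair to zero under (\ref{e9}); and (b) verify that the product of their orders equals the common order of $H^1(K_v, A[p])$ and $H^1(K_v, \Adual[p])$. Since a subgroup orthogonal to $H^1_{(S)}(K_v, A[p])$ of the correct cardinality must coincide with its orthogonal complement, these two facts together will force the asserted equality.

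For (a), the starting point is that $H^1_{[S]}(K_v, \Adual[p]) = \delta_v^{\vee}(\kappa_v^{\vee}) \subset \Sel_p(\Adual/K_v)$, which by Lemmas \ref{l1} and \ref{l2} is the orthogonal complement of $\Sel_p(A/K_v) = \ker \pi_v$. Hence pairing against any class in $H^1_{[S]}(K_v, \Adual[p])$ factors through $\pi_v$, producing an induced pairing $H^1(K_v, A)[p] \times \Sel_p(\Adual/K_v) \to \Q/\Z$; after using $\delta_v^{\vee}$ to identify $\Sel_p(\Adual/K_v)$ with $\Adual(K_v)/p\Adual(K_v)$, this coincides, by the standard Kummer--Tate compatibility, with the pairing induced from the classical Tate pairing $H^1(K_v, A) \times \Adual(K_v) \to \Q/\Z$. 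The perfect duality (\ref{e6}) says exactly that $\TT(A/K_v) \perp \Adual_0(K_v)$ under this classical pairing. Since $H^1_{(S)}(K_v, A[p]) = \pi_v^{-1}(\TT(A/K_v)[p])$ and $\kappa_v^{\vee}$ is the image of $\Adual_0(K_v)$ in $\Adual(K_v)/p\Adual(K_v)$ (from the analogue of (\ref{e4}) for $\Adual$, reduced modulo $p$), the orthogonality follows.

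For (b), Theorem \ref{t2} supplies $\#H^1_{(S)}(K_v, A[p]) = \#A(K_v)/pA(K_v) \cdot \#\TT(A/K_v)[p]$, while the analogue of (\ref{e12}) for $\Adual$ yields $\#H^1_{[S]}(K_v, \Adual[p]) = \#\Adual(K_v)/p\Adual(K_v) / \#\Phi_{\Adual,v}(k_v)/p\Phi_{\Adual,v}(k_v)$. The perfect finite-group duality (\ref{e6}) gives $\#\TT(A/K_v)[p] = \#\Phi_{\Adual,v}(k_v)/p\Phi_{\Adual,v}(k_v)$, so the product collapses to $\#\Sel_p(A/K_v) \cdot \#\Sel_p(\Adual/K_v) = \#H^1(K_v, A[p])$, the last equality being the Kummer--Tate duality already invoked in Lemmas \ref{l1} and \ref{l2}.

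The main obstacle I expect is the compatibility used in (a) between the pairing (\ref{e9}) on $H^1(K_v, A[p])$ and the classical Tate pairing $H^1(K_v, A) \times \Adual(K_v) \to \Q/\Z$ via the Kummer sequences (\ref{e7}) and (\ref{e8}); this is a well-known fact (cf.\ \cite[I.\S 3]{adt}), but it is the hinge that lets the local invariant encoded in (\ref{e6}) be transferred to cohomology classes of $A[p]$ and $\Adual[p]$.
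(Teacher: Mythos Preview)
Your proof is correct. The paper argues differently: it builds a commutative diagram whose top row is the Kummer sequence (\ref{e8}) for $\Adual$ and whose bottom row is the Pontryagin dual of (\ref{e10}); the middle vertical map $\beta_v\colon H^1(K_v,\Adual[p])\to \reallywidehat{H^1_{(S)}(K_v,A[p])}$ is the quotient dual to the inclusion $H^1_{(S)}(K_v,A[p])\hookrightarrow H^1(K_v,A[p])$, and a snake-lemma argument then identifies $\ker\beta_v=\delta_v^{\vee}(\ker\alpha_v^{\vee})=H^1_{[S]}(K_v,\Adual[p])$ in one stroke. Your approach instead separates the orthogonality claim from the cardinality count, reducing part~(a) to the classical fact that $\TT(A/K_v)$ annihilates $\Adual_0(K_v)$ under the Tate pairing $H^1(K_v,A)\times \Adual(K_v)\to\Q/\Z$, and handling part~(b) by a direct order computation using (\ref{e6}), (\ref{e10}), and (\ref{e12}). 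Both routes hinge on the same Kummer--Tate compatibility---in the paper it is implicit in the commutativity of the left square of the diagram, whereas you invoke it explicitly---so your version has the virtue of making the key input visible, while the paper's diagram chase is terser and avoids the separate cardinality bookkeeping.
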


\begin{proof}
Consider the commutative diagram
\[
\xymatrix{
1\ar[r] &\Adual(K_v)/p\Adual(K_v)\ar[r]^{\delta_v^{\vee}}\ar[d]^{\alpha_v^{\vee}} &H^1(K_v,\Adual[p])\ar[r]^{\pi_v^{\vee}}\ar[d]^{\beta_v} &H^1(K_v,\Adual)[p]\ar[r]\ar[d]^{||} &1\\
1\ar[r] &\Phi_{\Adual,v}(k_v)/p\Phi_{\Adual,v}(k_v)\ar[r] &\widehat{H^1_{(S)}(K_v,A[p])}\ar[r]^{\widehat{\delta_v}} &H^1(K_v,\Adual)[p]\ar[r] &1}
\]
The top row is the exact sequence (\ref{e8}), while the bottom row is the dual of the exact sequence (\ref{e10}). The right vertical map is an identity, while the left vertical map $\alpha_v^{\vee}$ follows from (\ref{e12}), whence it is surjective. It follows that the middle vertical map $\beta_v$, induced by the two boundary maps, is surjective and that
$$\ker(\beta_v)\isom\delta_v^{\vee}(\ker(\alpha_v^{\vee}))=H^1 _{[S]}(K_v,\Adual[p])$$
\end{proof}

\begin{corollary}\label{c4}
The family of subgroups $H^1_{[S]}(K_v,\Adual[p])\subset H^1(K_v,\Adual[p])$ for all $v$ form a Selmer structure for $\Adual$. Furthermore,
$$\frac{\#\Sel_p(\Adual/K_v)}{\#H^1_{[S]}(K_v,\Adual[p])}=\#\Phi_{\ssstyle{\Adual,v}}(k_v)[p]$$
\end{corollary}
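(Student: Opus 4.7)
The plan is that this corollary follows quickly from the definition of $H^1_{[S]}(K_v,\Adual[p])$ together with the defining sequence (\ref{e12}) applied to $\Adual$ in place of $A$. Concretely, let $\kappa_v^{\vee}\subset\Adual(K_v)/p\Adual(K_v)$ be the kernel of the reduction map to $\Phi_{\Adual,v}(k_v)/p\Phi_{\Adual,v}(k_v)$, so that by definition $H^1_{[S]}(K_v,\Adual[p])=\delta_v^{\vee}(\kappa_v^{\vee})$.

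For the Selmer-structure claim, the only thing to check is the unramifiedness condition at primes $v\notin S$. At such a $v$, $\Adual$ has good reduction, so $\Phi_{\Adual,v}(k_v)$ is trivial; the analog of (\ref{e12}) for $\Adual$ then forces $\kappa_v^{\vee}=\Adual(K_v)/p\Adual(K_v)$, whence
$$H^1_{[S]}(K_v,\Adual[p])=\delta_v^{\vee}(\Adual(K_v)/p\Adual(K_v))=\Sel_p(\Adual/K_v).$$
The same argument that establishes Lemma \ref{l1} for $A$ applies verbatim to $\Adual$ and identifies this with $H^1(G_v/I_v,\Adual[p]^{I_v})$, which is what is needed.

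For the index formula, since $\delta_v^{\vee}$ is injective by the Kummer sequence (\ref{e8}), I can pass the index through $\delta_v^{\vee}$ to obtain
$$[\Sel_p(\Adual/K_v):H^1_{[S]}(K_v,\Adual[p])]=[\Adual(K_v)/p\Adual(K_v):\kappa_v^{\vee}],$$
and the analog of (\ref{e12}) for $\Adual$ identifies the right-hand side with $\#\Phi_{\Adual,v}(k_v)/p\Phi_{\Adual,v}(k_v)$, which equals $\#\Phi_{\Adual,v}(k_v)[p]$ since $\Phi_{\Adual,v}(k_v)$ is a finite abelian group. As a sanity check, one could alternatively invoke Theorem \ref{t3} together with Tate local duality: the non-degenerate pairing (\ref{e9}) gives $\#H\cdot\#H^{\perp}=\#H^1(K_v,A[p])$ for any subgroup, and applying this to the two pairs of orthogonal complements reduces the desired ratio to the reciprocal of the one already computed in Corollary \ref{c3}. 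I do not anticipate any real obstacle here; the main thing is just to keep the dual conventions between $A$ and $\Adual$ straight.
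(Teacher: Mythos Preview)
Your argument is correct, but the route you take as your primary argument is not the one the paper intends. The paper states this result as a corollary to Theorem~\ref{t3}, so the implicit proof runs through duality: once $H^1_{[S]}(K_v,\Adual[p])$ is identified as the orthogonal complement of $H^1_{(S)}(K_v,A[p])$ under the local Tate pairing~(\ref{e9}), the Selmer-structure claim is automatic (the dual of a Selmer structure is a Selmer structure, as recalled in the introduction), and the index formula follows by comparing with the orthogonal pair $\Sel_p(A/K_v)$, $\Sel_p(\Adual/K_v)$ and invoking Corollary~\ref{c3}. This is exactly what you describe as your ``sanity check.'' Your main argument instead works directly from the defining sequence~(\ref{e12}) applied to $\Adual$, together with the injectivity of $\delta_v^{\vee}$, and never needs Theorem~\ref{t3} at all. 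The advantage of your route is that it is more elementary and self-contained; the advantage of the paper's route is that it explains why the statement is placed where it is and makes the symmetry with Corollary~\ref{c3} transparent.
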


Lastly, we take note of the following lemma whose proof follows directly from definition.

\begin{lemma}\label{l3}
The aforementioned Selmer structures have inclusions
$$H^1_{[S]}(K_v,A[p])\subset\Sel_p(A/K_v)\subset H^1_{(S)}(K_v,A[p])$$
for all $v\in{S}$.
\end{lemma}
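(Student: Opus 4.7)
The plan is to verify both inclusions directly from the definitions, with no substantial machinery needed; the lemma is essentially a bookkeeping statement about how the three Selmer structures were constructed.

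For the first inclusion $H^1_{[S]}(K_v,A[p])\subset\Sel_p(A/K_v)$, recall that $\kappa_v$ was defined in (\ref{e12}) as a subgroup of $A(K_v)/pA(K_v)$ (the kernel of $\alpha_v$). By definition $H^1_{[S]}(K_v,A[p])=\delta_v(\kappa_v)$ and $\Sel_p(A/K_v)=\delta_v(A(K_v)/pA(K_v))=\im(\delta_v)$, so applying the Kummer map $\delta_v$ to the inclusion $\kappa_v\subset A(K_v)/pA(K_v)$ gives the claim immediately. I would write one sentence to this effect.

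For the second inclusion $\Sel_p(A/K_v)\subset H^1_{(S)}(K_v,A[p])$, I need to show that every class of the form $\delta_v(x)$ lies in $\ker(\varphi_v)$. By the definition of $\varphi_v$ as the composition $\res_v^p\circ\pi_v$, it suffices to check that $\pi_v\circ\delta_v=0$. But this is exactly the exactness of the Kummer sequence (\ref{e7}) at the middle term $H^1(K_v,A[p])$, which gives $\im(\delta_v)=\ker(\pi_v)$. Thus $\varphi_v\circ\delta_v=\res_v^p\circ\pi_v\circ\delta_v=0$, and the inclusion follows.

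Both steps are essentially one-liners, so there is no real obstacle; the only thing to watch is to cite the correct exact sequences, namely (\ref{e7}) for the Kummer surjectivity/exactness and (\ref{e12}) for the definition of $\kappa_v$. Since the statement is meant to hold for all $v\in S$ but in fact the same argument works at every $v$, I would simply remark that the proof is uniform in $v$ and needs no hypothesis beyond the definitions given.
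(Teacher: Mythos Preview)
Your proposal is correct and matches the paper's own treatment: the paper simply states that the proof ``follows directly from definition,'' and your two one-line arguments (applying $\delta_v$ to the inclusion $\kappa_v\subset A(K_v)/pA(K_v)$, and using $\im(\delta_v)=\ker(\pi_v)\subset\ker(\varphi_v)$ from the Kummer sequence) are exactly the unpacking of that remark.
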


\section{Global Computations}
Let $K_S$ be the maximal extension of $K$ unramified outside $S$ and let $G_{S}=\Gal(K_S/K)$. For each $v\in{S}$, let $w$ be some prime of $K_S$ above $v$, and let $K_{S,w}$ be the union of the completions at $w$ of the finite extensions of $K$ contained in $K_S$. The composition
$$G_{K_v}\twoheadrightarrow\Gal(K_{S,w}/K_v)\hra G_S$$
induces the restriction map
\begin{equation}
\Res_v\;:\;H^1(G_S,A[p])\longrightarrow H^1(K_v,A[p])
\end{equation}
which in turn give the homomorphism
\begin{equation}
\beta_{S}\;:\;H^1(G_S,A[p])\longrightarrow\bigoplus_{v\in{S}} H^1(K_v,A[p])
\end{equation}
For each of the Selmer structures introduced in the previous section, we define the corresponding Selmer groups by 
\begin{align}
\Sel_p(A/K)&:=\beta_{S}^{-1}\left(\bigoplus_{v\in{S}}\Sel_p(A/K_v)\right)\subset H^1(G_S,A[p])\\
H^1_{(S)}(K,A[p])&:=\beta_{S}^{-1}\left(\bigoplus_{v\in{S}}H^1_{(S)}(K_v,A[p])\right)\subset H^1(G_S,A[p])\\
H^1_{[S]}(K,A[p])&:=\beta_{S}^{-1}\left(\bigoplus_{v\in{S}}H^1_{[S]}(K_v,A[p])\right)\subset H^1(G_S,A[p])
\end{align}
Since $H^1(G_S,A[p])$ is finite \cite[Cor. I.4.15]{adt}, it follows that each of the Selmer groups above are also finite. It also follows from Lemma \ref{l3} above that 

\begin{lemma}\label{l4}
There are proper inclusions
$$H^1_{[S]}(K,A[p])\subset\Sel_p(A/K)\subset H^1_{(S)}(K,A[p])$$
\end{lemma}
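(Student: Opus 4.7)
My plan is to derive both inclusions from the pointwise local inclusions of Lemma \ref{l3} by monotonicity of the preimage operation, and then to address the strictness (\emph{proper}) by invoking the cardinality data supplied by Corollaries \ref{c3}, \ref{c4} together with the Main Theorem.

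For the inclusions themselves: by construction, each of $H^1_{[S]}(K,A[p])$, $\Sel_p(A/K)$, and $H^1_{(S)}(K,A[p])$ is the preimage $\beta_S^{-1}\bigl(\bigoplus_{v\in S} L_v\bigr)$ under the single homomorphism
$$\beta_S \;:\; H^1(G_S,A[p])\lra\bigoplus_{v\in S}H^1(K_v,A[p])$$
for the three choices $L_v=H^1_{[S]}(K_v,A[p])$, $L_v=\Sel_p(A/K_v)$, or $L_v=H^1_{(S)}(K_v,A[p])$ respectively. Lemma \ref{l3} supplies the pointwise chain $H^1_{[S]}(K_v,A[p])\subset\Sel_p(A/K_v)\subset H^1_{(S)}(K_v,A[p])$ for every $v\in S$. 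Direct sum preserves inclusion, and taking preimage under a fixed homomorphism is monotonic, so the chain propagates to the three global groups without further work.

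The hard part will be to upgrade these inclusions to \emph{proper} inclusions. The obstacle is that strict inclusion of the local pieces (which Corollary \ref{c3} and Corollary \ref{c4} provide whenever some $\Phi_{\Adual,v}(k_v)[p]$ is nontrivial, by a factor of $\#\Phi_{\Adual,v}(k_v)[p]$) does not automatically imply strict inclusion of the preimages, because the extra local classes could in principle lie entirely outside the image of $\beta_S$. I would bridge this gap by invoking Theorem \ref{mt}, whose proof is logically independent of the current lemma (it proceeds via Corollary \ref{c1} applied to the chain of Selmer structures, together with the local computations of Section 2). Theorem \ref{mt}, together with its analogue with the roles of $A$ and $\Adual$ swapped and the identification $\Sel_p(\Adual/K_v)\cong\widehat{\Sel_p(A/K_v)}$ from Section 2, yields strict global cardinality inequalities $\#H^1_{[S]}(K,A[p])<\#\Sel_p(A/K)<\#H^1_{(S)}(K,A[p])$ in precisely the cases where the ambient local component groups carry nontrivial $p$-torsion. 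Combined with the inclusions already established, this forces both inclusions to be strict, completing the proof.
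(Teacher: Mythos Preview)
Your derivation of the inclusions is exactly what the paper does: the paper's entire proof is the one-line remark ``It also follows from Lemma~\ref{l3}'', i.e.\ monotonicity of $\beta_S^{-1}$ applied to the local chain. So for the inclusions themselves you match the paper.

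Where you diverge is in taking the word \emph{proper} literally and trying to prove strictness. Two issues arise. First, strictness is simply false in general: if $\Phi_{\Adual,v}(k_v)[p]=0$ for every $v\in S$ then Corollaries~\ref{c3} and~\ref{c4} show the three local structures coincide, hence so do the three global groups. You acknowledge this, but note that the lemma as stated contains no such hypothesis; the paper is evidently using $\subset$ for $\subseteq$ and ``proper'' loosely (its own proof makes no attempt to establish strictness). Second, and more substantively, even under the hypothesis that some $\Phi_{\Adual,v}(k_v)[p]\neq 0$, your proposed route through Theorem~\ref{mt} does not work. Theorem~\ref{mt} compares $\#H^1_{(S)}(K,A[p])$ with $\#H^1_{[S]}(K,\Adual[p])$ on the \emph{dual} side, not with $\#\Sel_p(A/K)$; likewise its $A\leftrightarrow\Adual$ analogue. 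Combining the two gives only the product identity
\[
\frac{\#H^1_{(S)}(K,A[p])}{\#H^1_{[S]}(K,A[p])}\cdot\frac{\#H^1_{(S)}(K,\Adual[p])}{\#H^1_{[S]}(K,\Adual[p])}
=\#\bigoplus_{v\in S}\Phi_{\Adual,v}(k_v)[p]\cdot\#\bigoplus_{v\in S}\Phi_{A,v}(k_v)[p],
\]
which does not isolate $\#\Sel_p(A/K)$ and hence cannot force either of the two individual inclusions to be strict. In short: keep your first paragraph, drop the strictness discussion, and read the lemma as asserting inclusions only.
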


The following proposition will be used in the next section.

\begin{proposition}\label{p1}
There are exact sequences
\begin{align*}
1\to\Sel_p(A/K)\to H^1(G_S,A[p])&\xrightarrow{\lambda_S}\bigoplus_{v\in{S}}H^1(K_v,A)[p]\\
1\to\Sel_p(A/K)\to H^1_{(S)}(K,A[p])&\xrightarrow{\rho_S}\bigoplus_{v\in{S}}\TT(A/K_v)[p]\\
1\to H^1_{[S]}(K,A[p])\to\Sel_p(A/K)&\xrightarrow{\eta_S}\bigoplus_{v\in{S}}\Phi_{\ssstyle{A,v}}(k_v)/p\Phi_{\ssstyle{A,v}}(k_v)
\end{align*}
\end{proposition}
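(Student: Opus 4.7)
The plan is to obtain each of the three exact sequences by combining the local exact sequences already established in Section~2 with the definitions of the global Selmer groups as preimages under the restriction map $\beta_S$. In every case the recipe is the same: take a local short exact sequence $1 \to L_v \to M_v \to N_v \to 1$ of subgroups of $H^1(K_v,A[p])$, apply $\bigoplus_{v\in S}$ and the composition with $\beta_S$, and identify the kernel of the resulting global map by unfolding definitions.

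For the first sequence, I would define $\lambda_S$ to be the composition
$$H^1(G_S,A[p])\xrightarrow{\beta_S}\bigoplus_{v\in S}H^1(K_v,A[p])\xrightarrow{\bigoplus\pi_v}\bigoplus_{v\in S}H^1(K_v,A)[p],$$
with $\pi_v$ from the Kummer sequence (\ref{e7}). Since $\ker(\pi_v)=\im(\delta_v)=\Sel_p(A/K_v)$, the kernel of $\lambda_S$ is exactly $\beta_S^{-1}\bigl(\bigoplus_{v\in S}\Sel_p(A/K_v)\bigr)=\Sel_p(A/K)$ by definition. For the second sequence, Theorem~\ref{t2} supplies locally the exact sequence $1\to\Sel_p(A/K_v)\to H^1_{(S)}(K_v,A[p])\to\TT(A/K_v)[p]\to 1$; I would let $\rho_S$ be the composition of $\beta_S$ (restricted to $H^1_{(S)}(K,A[p])$, which lands in $\bigoplus_{v\in S}H^1_{(S)}(K_v,A[p])$ by the definition of the global group) with $\bigoplus_{v}$ of the rightmost map. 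The kernel consists of those classes in $H^1_{(S)}(K,A[p])$ whose local image at each $v\in S$ lies in $\Sel_p(A/K_v)$, which is precisely $\Sel_p(A/K)$, using the inclusion of Lemma~\ref{l4}.

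For the third sequence, I would first transport the defining sequence (\ref{e12}) via the isomorphism $\delta_v:A(K_v)/pA(K_v)\xrightarrow{\sim}\Sel_p(A/K_v)$ to produce
$$1\to H^1_{[S]}(K_v,A[p])\to\Sel_p(A/K_v)\xrightarrow{\bar{\alpha}_v}\Phi_{\ssstyle A,v}(k_v)/p\Phi_{\ssstyle A,v}(k_v)\to 1,$$
where $\bar{\alpha}_v$ is induced by $\alpha_v$. Then $\eta_S$ is defined as the composition of $\beta_S$ (restricted to $\Sel_p(A/K)$, which lands in $\bigoplus_{v\in S}\Sel_p(A/K_v)$) with $\bigoplus_v\bar{\alpha}_v$, and its kernel unwinds to $H^1_{[S]}(K,A[p])$ by the same definitional argument.

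Honestly, there is no genuine obstacle: all three statements are formal consequences of the local results of Section~2 and the definitions of the three global Selmer groups. The only care required is checking that the restriction of $\beta_S$ to each global Selmer group indeed factors through the corresponding direct sum of local subgroups, so that the composite maps $\lambda_S$, $\rho_S$, $\eta_S$ are well defined; this is immediate from the definitions. In particular, one does not need to invoke surjectivity of the rightmost maps, since the asserted sequences are only left exact.
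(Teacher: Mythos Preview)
Your proposal is correct and follows essentially the same approach as the paper: both arguments derive each left-exact sequence by combining the definitions of the global Selmer groups as preimages under $\beta_S$ with the local exact sequences (\ref{e7}), (\ref{e10}), and (\ref{e12}), together with the inclusions of Lemmas~\ref{l3} and~\ref{l4}. The paper phrases this by first passing to the local quotient groups and then identifying them, while you explicitly construct $\lambda_S$, $\rho_S$, $\eta_S$ as compositions and compute their kernels, but the content is the same.
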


\begin{proof}
Lemmas \ref{l3} and \ref{l4} imply the exact sequences
\begin{align*}
1\to\Sel_p(A/K)\to H^1(G_S,A[p])&\to\bigoplus_{v\in{S}}H^1(K_v,A[p])/\im(\delta_v)\\
1\to\Sel_p(A/K)\to H^1_{(S)}(K,A[p])&\to\bigoplus_{v\in{S}}H^1_{(S)}(K_v,A[p])/\im(\delta_v)\\
1\to H^1_{[S]}(K,A[p])\to\Sel_p(A/K)&\to\bigoplus_{v\in{S}}\im(\delta_v)/\delta_v(\kappa_v)
\end{align*}
The proposition now follows from Eqs (\ref{e7}), (\ref{e10}), and (\ref{e12}).
\end{proof}

Furthermore, we find that the dual Selmer groups attached to each of these aforementioned Selmer groups can be identified as
\begin{align}
\Sel_{p^*}(\Adual/K)&=\Sel_p(\Adual/K)\\
H^1_{(S)^*}(K,\Adual[p])&=H^1_{[S]}(K,\Adual[p])
\end{align}
The Euler characteristics attached to the Selmer groups $\Sel_p(A/K)$ and $H^1_{(S)}(K,A[p])$ can be given, using Theorem \ref{t1}, as
\begin{align}
\chi_{\Sel_p}(K,A[p])&=\frac{\#\Sel_p(A/K)}{\#\Sel_p(\Adual/K)}\\
\chi_{(S)}(K,A[p])&=\frac{\#H^1_{(S)}(K,A[p])}{\#H^1_{[S]}(K,\Adual[p])}
\end{align}
Using Eqs (\ref{e2}) and (\ref{e11}), we obtain
$$\frac{\chi_{(S)}(K,A[p])}{\chi_{\Sel_p}(K,A[p])}=\prod_{v}\frac{\#H^1_{(S)}(K_v,A[p])}{\#\Sel_p(A/K_v)}=\#\bigoplus_{v\in{S}}\Phi_{\Adual,v}(k_v)[p]$$
which proves Theorem \ref{mt}. 

\begin{remark}
$\Sel_p(A/K)$ is usually referred to as the classical $p$-Selmer group of $A$. On the other hand, we may refer to $H^1_{(S)}(K,A[p])$ as the \emph{relaxed} Selmer group of $A$ at $S$ and to $H^1_{[S]}(K,A[p])$ as the \emph{restricted} Selmer group of $A$ at $S$. We note that these notations and terminologies (along with the construction of $S$) have been used differently in \cite{dar}.
\end{remark}

\section{A Dual Exact Sequence}

We present an alternative proof of Theorem \ref{mt} in this section. The following result is often referred to as the Cassels-Poitou-Tate sequence. The original version of this theorem appeared in \cite{cas} which involved elliptic curves.

\begin{theorem}\label{cas}
There is an exact sequence
\[\xymatrix{
1\ar[r]&\Sel_p(A/K)\ar[r]&H^1(G_S,A[p])\ar[r]^{\lambda_{S}}&\displaystyle\bigoplus_{v\in{S}}H^1(K_v,A)[p]\ar[d]\\
&\displaystyle\bigoplus_{v\in{S}}H^2(K_v,A[p])\ar[d]&H^2(G_S,A[p])\ar[l]&\widehat{\Sel_p(\Adual/K)}\ar[l]\\
&\widehat{H^0(G_S,\Adual[p])}\ar[r]&1
}
\]
\end{theorem}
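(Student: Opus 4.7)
The plan is to deduce Theorem \ref{cas} as the compressed form, associated with the classical $p$-Selmer structure on $A[p]$, of the Poitou-Tate nine-term exact sequence for the $G_S$-module $A[p]$ and its Cartier dual $\Adual[p]$. Recall from \cite[Thm.~I.4.10]{adt} that this nine-term sequence reads
\begin{multline*}
0 \to H^0(G_S,A[p]) \to \prod_{v\in S} H^0(K_v,A[p]) \to \widehat{H^2(G_S,\Adual[p])} \to H^1(G_S,A[p]) \\
\xrightarrow{\beta_S} \bigoplus_{v\in S} H^1(K_v,A[p]) \xrightarrow{\gamma_S} \widehat{H^1(G_S,\Adual[p])} \to H^2(G_S,A[p]) \\
\to \bigoplus_{v\in S} H^2(K_v,A[p]) \to \widehat{H^0(G_S,\Adual[p])} \to 0.
\end{multline*}
The strategy is to compress the four middle terms into three by replacing $\bigoplus_v H^1(K_v,A[p])$ and $\widehat{H^1(G_S,\Adual[p])}$ with their natural quotients $\bigoplus_v H^1(K_v,A)[p]$ (via the local Kummer sequence (\ref{e7})) and $\widehat{\Sel_p(\Adual/K)}$ (via Pontryagin duality applied to the inclusion $\Sel_p(\Adual/K) \hookrightarrow H^1(G_S,\Adual[p])$).

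Define $\lambda_S$ to be $\beta_S$ followed by the surjection induced on each factor by $\pi_v$ from (\ref{e7}). Since $\ker\pi_v = \Sel_p(A/K_v)$, the definition of the global Selmer group gives $\Sel_p(A/K) = \ker\lambda_S$, yielding exactness at $H^1(G_S,A[p])$. For the compressed sequence to make sense, I first check that both the outgoing map on $\bigoplus_v H^1(K_v,A)[p]$ and the outgoing map on $\widehat{\Sel_p(\Adual/K)}$ are well defined. For the former, compose $\gamma_S$ with the surjection $\widehat{H^1(G_S,\Adual[p])} \twoheadrightarrow \widehat{\Sel_p(\Adual/K)}$ and check that the result vanishes on $\bigoplus_v \Sel_p(A/K_v)$: this follows by identifying $\gamma_S$, via the local Tate pairing (\ref{e9}), as the Pontryagin dual of the global restriction on $\Adual$, and then invoking the local orthogonality $\Sel_p(A/K_v)^\perp = \Sel_p(\Adual/K_v)$ from Lemma~\ref{l2}. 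For the latter, check that the Poitou-Tate map $\widehat{H^1(G_S,\Adual[p])} \to H^2(G_S,A[p])$ descends to the quotient $\widehat{\Sel_p(\Adual/K)}$, which reduces to the elementary observation that locally trivial classes lie in the Selmer group. This defines the compressed map $\mu_S$ together with its successor into $H^2(G_S,A[p])$.

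Exactness at $\widehat{\Sel_p(\Adual/K)}$ is then automatic from Poitou-Tate exactness at $\widehat{H^1(G_S,\Adual[p])}$ by pushing through the quotient. The delicate point is exactness at $\bigoplus_v H^1(K_v,A)[p]$: unfolding definitions, this demands the equality $\gamma_S\bigl(\bigoplus_v \Sel_p(A/K_v)\bigr) = \ker\bigl(\widehat{H^1(G_S,\Adual[p])} \twoheadrightarrow \widehat{\Sel_p(\Adual/K)}\bigr)$. By Pontryagin duality this is equivalent to asserting that the composition $H^1(G_S,\Adual[p]) \to \bigoplus_v H^1(K_v,\Adual[p]) \to \bigoplus_v H^1(K_v,\Adual)[p]$ has kernel precisely $\Sel_p(\Adual/K)$, which is again just the definition of the dual Selmer group. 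The final three terms of the sequence are retained verbatim from Poitou-Tate.

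The principal obstacle is thus the middle exactness verification. However, every required compatibility reduces, via Pontryagin duality, to the local orthogonality of Selmer and dual Selmer structures established in Section~2, so while no new substantive input is needed beyond Poitou-Tate and Lemma~\ref{l2}, the bookkeeping through several layers of duals must be tracked carefully.
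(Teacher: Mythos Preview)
The paper does not give its own proof of Theorem~\ref{cas}: it is stated there as a known result (the Cassels--Poitou--Tate sequence), with a reference to \cite{cas} for the original elliptic-curve version and implicitly to \cite{adt} for the general framework. There is therefore no argument in the paper to compare against.

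That said, your derivation is the standard one and is correct. You start from the nine-term Poitou--Tate sequence for the finite module $A[p]$ \cite[Thm.~I.4.10]{adt} and compress the middle four terms using the local Kummer sequence~(\ref{e7}) and the inclusion $\Sel_p(\Adual/K)\hookrightarrow H^1(G_S,\Adual[p])$. The two well-definedness checks are handled correctly: that $q\circ\gamma_S$ kills $\bigoplus_v\Sel_p(A/K_v)$ follows from local orthogonality (Lemma~\ref{l2}), and that the connecting map descends to $\widehat{\Sel_p(\Adual/K)}$ follows, as you say, from $\ker\beta_S^{\vee}\subseteq\Sel_p(\Adual/K)$ together with the identification $\im\gamma_S=\mathrm{Ann}(\ker\beta_S^{\vee})$. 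The key step---exactness at $\bigoplus_v H^1(K_v,A)[p]$---you correctly reduce to the equality $\gamma_S\bigl(\bigoplus_v\Sel_p(A/K_v)\bigr)=\mathrm{Ann}\bigl(\Sel_p(\Adual/K)\bigr)$, and your dualization argument for this is valid: writing $\gamma_S=\widehat{\beta_S^{\vee}}$ via local Tate duality, one has $\widehat{\beta_S^{\vee}}\bigl(\mathrm{Ann}(\bigoplus_v\Sel_p(\Adual/K_v))\bigr)=\mathrm{Ann}\bigl((\beta_S^{\vee})^{-1}(\bigoplus_v\Sel_p(\Adual/K_v))\bigr)$, which is exactly $\mathrm{Ann}(\Sel_p(\Adual/K))$ by definition. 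The only place where a reader might want more detail is this last annihilator identity (it uses injectivity of $\Q/\Z$ to extend characters), but the argument is sound.
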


The results obtained in the last section can be used to establish the following refinement to Theorem \ref{cas}. Note that Theorem \ref{mt} is an immediate consequence of Theorem \ref{t4}.

\begin{theorem}\label{t4}
There is an exact sequence of finite groups
\[\xymatrix{
1\ar[r]&\Sel_p(A/K)\ar[r]&H^1_{(S)}(K,A[p])\ar[r]&\displaystyle\bigoplus_{v\in{S}}\TT(A/K_v)[p]\ar[d]\\
&1&\widehat{H^1_{[S]}(K,\Adual[p])}\ar[l]&\widehat{\Sel_p(\Adual/K)}\ar[l]
}
\]
\end{theorem}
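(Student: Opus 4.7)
The plan is to split the desired six-term sequence into a left and a right half via Proposition \ref{p1}, define the connecting map between them using the Cassels--Poitou--Tate sequence of Theorem \ref{cas}, and verify exactness by reconciling the local Tate pairing (\ref{e6}) with the global Poitou--Tate pairing on the relevant subspaces.

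For the left half I would simply invoke the second sequence of Proposition \ref{p1}, which already gives
\[
1 \to \Sel_p(A/K) \to H^1_{(S)}(K,A[p]) \xrightarrow{\rho_S} \bigoplus_{v\in S} \TT(A/K_v)[p].
\]
For the right half I would apply the third sequence of Proposition \ref{p1} with $\Adual$ in place of $A$ and rewrite it as
\[
1 \to H^1_{[S]}(K,\Adual[p]) \to \Sel_p(\Adual/K) \to \im(\eta_S^{\vee}) \to 1,
\]
with $\im(\eta_S^{\vee}) \subset \bigoplus_{v\in S}\Phi_{\Adual,v}(k_v)/p\Phi_{\Adual,v}(k_v)$. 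Pontryagin-dualizing this and invoking the local Tate duality (\ref{e6}) to identify $\widehat{\Phi_{\Adual,v}(k_v)/p\Phi_{\Adual,v}(k_v)}$ with $\TT(A/K_v)[p]$ then produces the exact right half
\[
\bigoplus_{v\in S}\TT(A/K_v)[p] \xrightarrow{\psi} \widehat{\Sel_p(\Adual/K)} \to \widehat{H^1_{[S]}(K,\Adual[p])} \to 1,
\]
where $\psi$ is the surjection onto $\widehat{\im(\eta_S^{\vee})}$ composed with the inclusion into $\widehat{\Sel_p(\Adual/K)}$.

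To glue the two halves, I would construct the middle arrow from Theorem \ref{cas} and show it agrees with $\psi$. Using the inclusion $\TT(A/K_v)[p] \hra H^1(K_v,A)[p]$ from (\ref{e5}), the CPT connecting map restricts to a map $\bigoplus_{v\in S}\TT(A/K_v)[p] \to \widehat{\Sel_p(\Adual/K)}$. The crux is to show that this restriction coincides with $\psi$: given $\eta \in \Sel_p(\Adual/K)$, one writes $\res_v\eta = \delta_v^{\vee}(Q_v)$ for some $Q_v \in \Adual(K_v)/p\Adual(K_v)$, and then the local Tate pairing $H^1(K_v,A)[p]\times\Adual(K_v)/p\Adual(K_v)\to\Q/\Z$, restricted to classes in $\TT(A/K_v)[p]$, depends on $Q_v$ only through its image $\alpha_v^{\vee}(Q_v)$ in $\Phi_{\Adual,v}(k_v)/p\Phi_{\Adual,v}(k_v)$, with resulting value precisely the pairing (\ref{e6}). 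This compatibility is essentially the content of the commutative diagram appearing in the proof of Theorem \ref{t3}.

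Granted this identification, exactness at $\bigoplus_{v\in S}\TT(A/K_v)[p]$ falls readily out of Theorem \ref{cas}: a tuple $(x_v)$ killed in $\widehat{\Sel_p(\Adual/K)}$ lifts by CPT exactness to some $\xi \in H^1(G_S,A[p])$, and $\xi$ automatically belongs to $H^1_{(S)}(K,A[p])$ because $\pi_v(\res_v\xi)=x_v \in \TT(A/K_v)[p]$ at each $v\in S$ and $\xi$ is unramified (forcing $\pi_v(\res_v\xi)=0 \in \Sel_p(A/K_v)\subset H^1_{(S)}(K_v,A[p])$) at each $v\notin S$. The principal obstacle is thus the pairing-compatibility check of the preceding paragraph; once it is in place, the refined exact sequence is complete, and Theorem \ref{mt} drops out by taking the alternating product of cardinalities along its six terms.
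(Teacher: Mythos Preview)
Your proposal is correct and follows essentially the same strategy as the paper: split the sequence into left and right halves via Proposition~\ref{p1}, use the Cassels--Poitou--Tate sequence of Theorem~\ref{cas} to supply the connecting map, and verify exactness at the middle term. The paper carries out the middle exactness by a ker--coker diagram chase on the pair $(\lambda_S,\oplus\res_v^p)$ to identify $\im\rho_S=\ker\phi$ and then asserts that $\widehat{\eta_S^{\vee}}$ factors as $\phi$ followed by the inclusion $\coker\lambda_S\hra\widehat{\Sel_p(\Adual/K)}$; your version is slightly more explicit in flagging the local pairing compatibility (that the Tate pairing on $\TT(A/K_v)[p]$ depends on the dual point only through its image in $\Phi_{\Adual,v}(k_v)/p\Phi_{\Adual,v}(k_v)$) as the point requiring justification, which is exactly what the paper's assertion rests on.
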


\begin{proof}
Consider the following exact sequences from Proposition \ref{p1}
\begin{align*}
1\to\Sel_p(A/K)\to H^1_{(S)}(K,A[p])&\xrightarrow{\rho_S}\bigoplus_{v\in{S}}\TT(A/K_v)[p]\\
1\to H^1_{[S]}(K,\Adual[p])\to\Sel_p(\Adual/K)&\xrightarrow{\eta_S^{\vee}}\bigoplus_{v\in{S}}\Phi_{\ssstyle{\Adual,v}}(k_v)/p\Phi_{\ssstyle{\Adual,v}}(k_v)
\end{align*}
It suffices to prove that the images of $\rho_{S}$ and $\eta_{S}^{\vee}$ are orthogonal complements of each other under the sum of the local pairings (\ref{e6}). Equivalently, we prove that the sequence
$$H^1_{(S)}(K,A[p])\xrightarrow{\rho_{S}}\bigoplus_{v\in{S}}\TT(A/K_v)[p]\xrightarrow{\widehat{\eta_{S}^{\vee}}}\widehat{\Sel_p(\Adual/K)}$$
is exact at the middle term, i.e. $\im{\rho_{S}}=\ker{\widehat{\eta_{S}^{\vee}}}$. The commutative diagram
\[\xymatrixcolsep{3pc}\xymatrix{
H^1(G_S,A[p])\ar[d]^{\lambda_{S}}\ar[dr]^{\delta_{S}}&\\
\displaystyle\bigoplus_{v\in{S}}H^1(K_v,A)[p]\ar[r]^{\oplus\res_v^p}&\displaystyle\bigoplus_{v\in{S}}H^1(K_v^{\ur},A)[p]
}
\]
yields the exact sequence
$$1\to\ker{\lambda_{S}}\to\ker{\delta_{S}}\to\ker(\oplus\res_v^p)\xrightarrow{\phi}\coker{\lambda_{S}}\to\coker{\delta_{S}}$$
part of which we identify as
$$H^1_{(S)}(K,A[p])\xrightarrow{\rho_{S}}\bigoplus_{v\in{S}}\TT(A/K_v)[p]\xrightarrow{\phi}\coker{\lambda_{S}}$$
Hence, we have $\im{\rho_{S}}=\ker{\phi}$. On the other hand, Theorem \ref{cas} yields an inclusion
$$\coker{\lambda_{S}}\hra\widehat{\Sel_p(\Adual/K)}$$
The map $\widehat{\eta_{S}}$ is the composition
$$\bigoplus_{v\in{S}}\TT(A/K_v)[p]\xrightarrow{\phi}\coker{\lambda_{S}}\hra\widehat{\Sel_p(\Adual/K)}$$
which implies that $\ker{\phi}=\ker{\widehat{\eta_{S}}}$. Hence, $\im{\rho_{S}}=\ker{\widehat{\eta_{S}}}$.

\end{proof}

\begin{remark}
More generally, if $\cL_1\subseteq\cL_2$ are Selmer structures for $M$ then there are exact sequences
\[\xymatrix{
1\ar[r]&H^1_{\cL_1}(K,M)\ar[r]&H^1_{\cL_2}(K,M)\ar[r]&\displaystyle\bigoplus_{v}H^1_{\cL_2}(K_v,M)/H^1_{\cL_1}(K_v,M)\\
1\ar[r]&H^1_{\cL_2^*}(K,M^*)\ar[r]&H^1_{\cL_1^*}(K,M^*)\ar[r]&\displaystyle\bigoplus_{v}H^1_{\cL_1^*}(K_v,M^*)/H^1_{\cL_2^*}(K_v,M^*)
}
\]
such that the images of the two right-hand maps are orthogonal complements of each other under the local Tate pairing \cite[Thm. 1.7.3]{rubin}. Consequently, there is a canonical exact sequence of finite groups
\[\xymatrix{
1\ar[r]&H^1_{\cL_1}(K,M)\ar[r]&H^1_{\cL_2}(K,M)\ar[r]&\displaystyle\bigoplus_{v}H^1_{\cL_2}(K_v,M)/H^1_{\cL_1}(K_v,M)\ar[d]\\
&1&\widehat{H^1_{\cL_2^*}(K,M^*)}\ar[l]&\widehat{H^1_{\cL_1^*}(K,M^*)}\ar[l]
}
\]
\end{remark}
Note that Eq.(\ref{e3}) follows immediately from this sequence.

\newpage
\newcommand{\etalchar}[1]{$^{#1}$}


\begin{thebibliography}{FpS{\etalchar{+}}01}

\bibitem[Bis15]{bis}
S.~Biswas, \emph{Tamagawa torsors of an Abelian variety}, J. Ramanujan Math. Soc. {\bf{30}}, No. 1 (2015), 67--81.

\bibitem[BL99]{bl}
S.~Bosch, Q.~Liu, \emph{Rational points of group of components of a N{\'e}ron Model}, Manuscripta Math. 98 (1999), no. 3, 275--293.

\bibitem[Cas64]{cas}
J.W.S~Cassels, \emph{The dual exact sequence}, J. Reine Angew. Math. {\bf{216}} (1964), 150--158.


\bibitem[Dar04]{dar}
H.~Darmon, \emph{Rational Points on Modular Elliptic Curves}, CBMS Regional Conference Series in Mathematics, 101, American Mathematical Society, Providence, RI, 2004. 

\bibitem[DDT95]{ddt}
H.~Darmon, F.~Diamond, R.~Taylor, \emph{Fermat's Last Theorem}, in: Elliptic Curves, Modular Forms \& Fermat's Last Theorem (Hong Kong 1993), 2--140, International Press, Cambridge, MA, 1997.

\bibitem[Gre89]{gre}
R.~Greenberg, \emph{Iwasawa theory for $p$-adic representations}, in: Algebraic Number Theory, in honor of K. Iwasawa, Adv. Studies in Pure Math. {\bf{17}}, Academic Press, San Diego, 1989, pp. 97--137.

\bibitem[Gro73]{gro}
A.~Grothendieck, \emph{Mod{\`e}les de N{\'e}ron et monodromie}, LNM 288, Seminaire de G{\'e}ometrie 7, Expose IX, Springer-Verlag, 1973.

\bibitem[HS00]{hs}
M.~Hindry, J.~Silverman, \emph{Diophantine Geometry}, Springer, 2000.


\bibitem[Lor11]{lorenzini}
D.~Lorenzini, \emph{Torsion and Tamagawa numbers}, Ann. Inst. Fourier, 61 no. 5 (2011), 1995--2037.

\bibitem[Mil02]{adt}
J. S.~Milne, \emph{Arithmetic Duality Theorems}, Book Surge, 2002.

\bibitem[Mum70]{mum}
D.~Mumford, \emph{Abelian Varieties}, Oxford University Press: Oxford, 1970.

\bibitem[Rub00]{rubin}
K.~Rubin, \emph{Euler systems}, Annals of Math. Studies {\bf{147}}, Princeton University Press, 2000.

\bibitem[Sil94]{silverman}
J.~Silverman, \emph{Advanced topics in the arithmetic of elliptic curves}, GTM 151, Springer, 1994.

\bibitem[Wil95]{wiles}
A.~Wiles, \emph{Modular elliptic curves and Fermat's Last Theorem}, Annals of Math. {\bf{141}} (1995), 443--551.





  

\end{thebibliography}
\end{document}